\newtheorem{theorem}{Theorem}[section]
\newtheorem{lemma}[theorem]{Lemma}
\newtheorem{corollary}[theorem]{Corollary}
\newtheorem{proposition}[theorem]{Proposition}
\newtheorem*{thm.strat}{Theorem \ref{thm.strat}}
\newtheorem*{thm.xknonempty}{Theorem \ref{thm.xknonempty}}
\newtheorem*{thm.w1chibounds}{Theorem \ref{thm.w1chibounds}}
\newtheorem*{thm.shext}{Theorem \ref{thm.shext}}
\theoremstyle{definition}
\newtheorem{definition}[theorem]{Definition}
\newtheorem{example}[theorem]{Example}
\newtheorem{remark}[theorem]{Remark}
\newtheorem{problem}[theorem]{Open Question}
\numberwithin{equation}{section}
\newcommand{\ce}{\mathrel{\mathop:}=}                
\newcommand{\ec}{=\mathrel{\mathop:}}                
\newcommand{\op}{\mathcal{O}_{\mathbb{P}^1}}         
\newcommand{\abs}[1]{\left\lvert#1\right\rvert}      
\DeclareMathOperator{\Ext}{Ext}           
\DeclareMathOperator{\Sing}{Sing}         
\DeclareMathOperator{\Tot}{Tot}           
\DeclareMathOperator{\Pic}{Pic}           
\DeclareMathOperator{\Sym}{Sym}           
\DeclareMathOperator{\SEnd}{\mathcal{E}\!\mathit{nd}} 
\title{Sheaves on singular varieties}
\author{Elizabeth Gasparim \and Thomas K\"{o}ppe}
\address{School of Mathematics, The University of Edinburgh\\
James Clerk Maxwell Building,
The King's Buildings,
Mayfield Road\\
Edinburgh, EH9 3JZ,
United Kingdom}
\email{Elizabeth.Gasparim@ed.ac.uk}
\email{t.koeppe@ed.ac.uk}
\subjclass{}
\keywords{Reflexive sheaves, local homolorphic Euler characteristic, moduli spaces}
\begin{document}

\begin{abstract}
We prove existence of reflexive sheaves on singular surfaces and
threefolds with prescribed numerical invariants and study their
moduli.
\end{abstract}

\maketitle

\thanks{}

\section{Motivation}\label{sec.motivation}

Sheaves on singular varieties have become very popular recently
because of their appearance in Physics, String Theory and Mirror
Symmetry. In particular, many open questions about sheaves on singular
varieties have come to light. The corresponding mathematical tools,
however, are waiting to be developed. Our aim in this paper is to
entice singularists to develop some basic techniques needed to
approach such questions.

It is extremely common for a physicist or string theorist to start up
a lecture by giving a partition function for a theory, and now even
algebraic geometers are quite often doing the same. It is not just a
fashion, but the fact is that this is an extremely efficient way to
present results. The general format of such partition functions is of
an infinite sum whose terms contain integrals over moduli spaces. Here
are some examples. We will not need details from these expressions,
just the observation that they all contain integrals over moduli
spaces.

\begin{example}\label{Ne}(\textit{String Theory})
The Nekrasov partition function for $N=2$ supersymmetric $SU(r)$ pure
gauge theory on a complex surface $X$ is given by an expression of the
form
\[ Z_X \ce \Lambda^{(1-r)d \cdot d} \sum_{n \geq 0}\Lambda^{2rn}
   \int_{{\mathfrak M}_{r,d, n}(X)}1 \text{ ,} \]
where ${\mathfrak M}_{r,d,n}(X)$ is the moduli space of framed
torsion-free sheaves or rank $r$, and Chern classes $c_1=d$ and
$c_2=n$. For the case of gauge theories with matter, one writes a
similar expression but with more interesting integrands, see
\cite{GL}.
\end{example}

\begin{example}\label{DT}(\textit{Donaldson--Thomas Theory})
For a Calabi--Yau threefold $X$, the partition function for
Donaldson--Thomas theory is given by:
\[ Z_X \ce \sum_{\beta\in H_2(X, \mathbb Z)} \sum_{n\in \mathbb Z}Q^nv^\beta
   \int_{\left[I_n(X,\beta)\right]^\text{vir}} 1 \text{ ,} \]
where $I_n(X,\beta)$ is the moduli space of ideal sheaves $\mathcal I$
fitting into an exact sequence
\[ 0 \longrightarrow {\mathcal I} \longrightarrow {\mathcal O}_X
   \longrightarrow {\mathcal O}_Y \longrightarrow 0 \]
and satisfying 
\[ \chi(\mathcal O_Y)=n \]
and $[Y] = \beta\in H_2(X,\mathbb Z)$, where $\chi$ is the holomorphic
Euler characteristic, see \cite{MNOP}.
\end{example}

\begin{example}\label{GW}(\textit{Gromov--Witten Theory})
For a Calabi--Yau threefold $X$, the partition function for
degree-$\beta$ Gromov--Witten invariants is given by
\[ Z_X \ce \exp \sum_{\beta\neq 0}\sum_{g\geq 0} u^{2g-2}v^\beta
   \int_{\left[\overline{M_g}(X,\beta)\right]^\text{vir}}1 \text{ ,} \]
where $\overline{M_g}(X,\beta)$ is the moduli space of genus-$g$ curves
representing the class $\beta \in H_2(X,\mathbb Z)$. There is a
precise sense in which this partition function is equivalent to the
one in Example~\ref{DT}, see \cite{MNOP}.
\end{example}

These examples illustrate the appearance of integrals over moduli
spaces of sheaves. Even in the case of moduli spaces of maps of
Example~\ref{GW} the theory is still related to a theory given by
integration over moduli of sheaves. Observe that the definition of
moduli spaces itself requires a choice of numerical invariants: in
Example~\ref{Ne} the Chern classes and in Example~\ref{DT} the
Euler characteristic. So, we now agree that we are interested in
moduli spaces of sheaves on surfaces and threefolds. Of course, the
physics motivation is just a bonus, and we could have been interested
in such moduli spaces for purely geometric reasons, as they are part of
classical algebraic geometry. Now physics dictates that we should
consider theories defined over singular varieties. In fact, some of
the most popular categories considered currently by physicists and
string theorists turn out empty in the absence of singularities; such
is the case of the Fukaya--Seidel category and the Orlov category of
singularities. Thus we arrive at the conclusion that we need to
understand moduli of sheaves on singular varieties. Both the case of
global moduli of sheaves on projective varieties and the case of local
moduli on a small neighborhood of a singularity are of interest. For
the local case there is an added difficulty: What are the correct
numerical invariants to be considered? In this paper we will show that
the local holomorphic Euler characteristic provides a satisfactory
invariant for sheaves on local surfaces. For the case of local
threefolds however, the study of numerical invariants is work in
progress, and much remains to be done. The goal of this paper is to
describe partial progress in the understanding of these questions. We
define new numerical invariants for the threefold case, and give
existence of sheaves with given local numerical data.

\section*{Acknowledgements}

We are very happy to contribute to this volume in honour of Andrew
DuPlessis, and thankful to Christophe Eyral for giving us this
opportunity. The final version of this article was completed while the first 
author visited UNICAMP under FAPESP project number 2009/08587-5; 
their hospitality and generous support
is hereby gladly acknowledged.

\section{Main Results}\label{sec.mainres}

In this paper we consider rational surface singularities obtained by
contracting a line $\ell \cong \mathbb{P}^1$ inside a smooth surface
or threefold. Numerous approaches using numerical invariants or
characteristic classes of some sort have been proposed in the past,
see e.g.\ \cite{bra}, and in Section~\ref{inv} we define numerical
invariants, some of which are new, that we need for the present
situation. To set the stage, in Section~\ref{sec.surfaces} we recall
some of our earlier results for sheaves on singular surfaces. The
results for threefolds presented in Section~\ref{3folds} are new and
will appear in more detail in \cite{Ko}.

In Section~\ref{inv} we define the local holomorphic Euler
characteristic $\chi(\ell, \mathcal{F})$ of a reflexive sheaf
$\mathcal{F}$. We will present the following results.

\begin{thm.xknonempty}
Let ${\mathfrak M}_n(X_k)$ be the moduli of reflexive sheaves on
$\mathbb{C}^2\bigl/\mathbb{Z}_k$ with local holomorphic Euler
characteristic equal to $n$. Then for all $n \geq 0$, ${\mathfrak
  M}_n(X_k)$ is non-empty.
\end{thm.xknonempty}


\begin{thm.w1chibounds}
For every rank-$2$ bundle $E$ on $W_1 \ce \Tot\bigl(\op(-1) \oplus
\op(-1)\bigr)$ with $c_1(E)=0$ and $E\rvert_{\mathbb{P}^1} \cong \op(-j)
\oplus \op(j)$, the following bounds are sharp:
\[ 
   j-1
   \leq \chi(\ell, \pi_*E) = \mathbf{h}(E) \leq (j^2+j)(j-1)/6 \text{ .} \]
Here $\pi \colon W_1 \to X$ is the contraction of the zero section $\ell$
and $X$ is the singular threefold $xy - zw = 0$ in $\mathbb C^4$.
\end{thm.w1chibounds}

\begin{thm.shext}
Let $X$ be the singular threefold $xy - zw = 0$ in $\mathbb C^4$. For
each $j \geq 2$ there exists a $(4j-5)$-dimensional family of rank-$2$
reflexive sheaves on $X$ with local holomorphic Euler characteristic $j-1$.
\end{thm.shext}

For each of the cases $j=0$ or $1$, our methods produce 
only the direct images of the split sheaves;
both have local holomorphic Euler characteristic $0$.

\section{Numerical Invariants}\label{inv}

In this section we define numerical invariants for sheaves on a
neighborhood of a singularity. Our first invariant is defined for any
dimension, and is particularly adapted to study reflexive sheaves that
are direct images of vector bundles on a resolution. Let $\pi \colon
(Z, \ell) \to (X,x)$ be a resolution of an isolated quotient
singularity, $\mathcal{F}$ a reflexive sheaf on $Z$ and $n \ce\dim
X$. The following definition is due to Blache, \cite[Def.\ 3.9]{bla1}.

\begin{definition}
The \emph{local holomorphic Euler characteristic} of $\pi_*\mathcal{F}$ at $x$ is
\begin{equation}\label{eq.euler}
  \chi\bigl(x, \pi_*\mathcal{F}\bigr) \ce \chi\bigl(\ell, \mathcal{F}\bigr)
  \ce h^0\bigl(X; \; (\pi_*\mathcal{F})^{\vee\vee} \bigl/ \pi_* \mathcal{F} \bigr)
  + \sum_{i=1}^{n-1}(-1)^{i-1} h^0\bigl(X; \; R^i \pi_* \mathcal{F}\bigr) \text{ .}
\end{equation}
\end{definition}

For the case when $X$ is a compact orbifold, Blache \cite{bla1} shows that
the global Euler characteristics of $X$ and its resolution are related by
\begin{equation}\label{blache}
  \chi\bigl(X, (\pi_*\mathcal{F})^{\vee\vee}\bigr) = \chi\bigl(Z, \mathcal{F}\bigr) +
  \sum_{x \in \Sing X} \chi\bigl(x, \pi_* \mathcal{F}\bigr) \text{ .}
\end{equation}

\begin{example}(\textit{Homological dimension 1})
Consider the case when $Z$ is itself the total space of a bundle on
$\mathbb{P}^1$. Then $Z$ has homological dimension one, and the
expression on the right-hand side of \eqref{eq.euler} reduces to two
terms, which we call the \emph{width} and \emph{height} of $\mathcal{F}$,
respectively:
\begin{equation}\label{eq.hw}
  \mathbf{w}(\mathcal{F}) \ce h^0\bigl(X; \; (\pi_*\mathcal{F})^{\vee\vee} \bigl/ \pi_* \mathcal{F}\bigr)
  \qquad\text{and}\qquad \mathbf{h}(\mathcal{F}) \ce h^0\bigl(X; \; R^1 \pi_* \mathcal{F}\bigr) \text{ .}
\end{equation}
Hence $\chi  = \mathbf{w}+ \mathbf {h}$.
\end{example}

The case when $Z$ is the total space of a negative line bundle on
$\mathbb{P}^1$ was studied in \cite{bgk1} and \cite{gkm}.
Unfortunately, the width vanishes in higher dimensions.

\begin{lemma}\label{width}
\cite[Lemma 5.2]{bgk1} Let $C$ be a curve of codimension $n \geq 2$ in
$Z$ and $\pi \colon Z \to X$ the contraction of $C$ to a point. Then
for any reflexive sheaf $\mathcal{F}$ on $Z$ we have
\[ h^0\bigl(X ; \; (\pi_*\mathcal{F})^{\vee\vee} \bigl/ \pi_* {\mathcal{F}}\bigr) = 0 \text{ .} \]
\end{lemma}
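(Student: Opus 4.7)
The plan is to show that $\pi_*\mathcal{F}$ is already reflexive, since this immediately forces $(\pi_*\mathcal{F})^{\vee\vee}/\pi_*\mathcal{F} = 0$ and hence its $h^0$ vanishes. Two facts about the setup make this reasonable: first, $\pi$ restricts to an isomorphism $Z \setminus C \xrightarrow{\sim} X \setminus \{x\}$, so $\pi_*\mathcal{F}$ is a torsion-free coherent sheaf on $X$ which agrees with its reflexive hull away from $x$; and second, $X$ is normal (as a contraction of the smooth $Z$ with $\pi_*\mathcal{O}_Z = \mathcal{O}_X$). By the standard criterion, a torsion-free sheaf on a normal variety is reflexive if and only if it satisfies Hartogs' extension across every closed subset of codimension $\geq 2$. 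Since $\{x\}$ has codimension $\dim X = n + 1 \geq 3$ in $X$, it suffices to verify Hartogs' property at the single point $x$.

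The core step is therefore to promote reflexivity of $\mathcal{F}$ on $Z$ to Hartogs extension for $\pi_*\mathcal{F}$ on $X$. Given an open neighbourhood $U$ of $x$ and a section $s \in \Gamma\bigl(U \setminus \{x\}, \; \pi_*\mathcal{F}\bigr)$, I would identify $s$ with a section $\tilde s \in \Gamma\bigl(\pi^{-1}(U) \setminus C, \; \mathcal{F}\bigr)$ via $\pi^{-1}(U \setminus \{x\}) = \pi^{-1}(U) \setminus C$, which holds set-theoretically because $\pi^{-1}(x) = C$. Since $C$ has codimension $n \geq 2$ in $Z$, it has codimension $\geq 2$ in the open set $\pi^{-1}(U)$, and the reflexivity of $\mathcal{F}$ lets $\tilde s$ extend uniquely to a section over $\pi^{-1}(U)$. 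Pushing forward yields the desired extension of $s$ to $U$, so $\pi_*\mathcal{F}$ is normal in the Hartogs sense and hence reflexive.

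Combining these two steps gives $(\pi_*\mathcal{F})^{\vee\vee} = \pi_*\mathcal{F}$ on all of $X$, so the quotient is the zero sheaf and its $h^0$ vanishes. The argument has essentially no hard geometric content; it is pure bookkeeping of codimensions together with the characterisation of reflexive sheaves on normal varieties as torsion-free sheaves satisfying Hartogs' extension. The only place where one must be slightly careful is in the identification between sections of $\pi_*\mathcal{F}$ away from $x$ and sections of $\mathcal{F}$ away from $C$, which is straightforward from the definition of direct image but is the one point where the hypothesis $\mathrm{codim}_Z C \geq 2$ is actually used.
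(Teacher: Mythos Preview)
The paper does not actually prove this lemma; it merely cites \cite[Lemma~5.2]{bgk1}. Your argument is the standard one and is correct: on a normal variety a coherent sheaf is reflexive precisely when it is torsion-free and satisfies Hartogs' extension across every closed subset of codimension $\geq 2$, and both properties for $\pi_*\mathcal{F}$ are inherited from $\mathcal{F}$ via the identification $\Gamma(U\setminus\{x\},\pi_*\mathcal{F})=\Gamma(\pi^{-1}(U)\setminus C,\mathcal{F})$ together with $\mathrm{codim}_Z C\geq 2$. This is essentially the proof given in \cite{bgk1} as well.

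One small correction of emphasis: in your final paragraph you locate the use of the codimension hypothesis in the \emph{identification} of sections of $\pi_*\mathcal{F}$ away from $x$ with sections of $\mathcal{F}$ away from $C$. That identification is purely formal (the definition of direct image) and holds for any $C$; the hypothesis $\mathrm{codim}_Z C\geq 2$ enters only in the next step, where reflexivity of $\mathcal{F}$ is invoked to extend the section across $C$.
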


\begin{example}(\textit{Flop})
When $W_1 = \Tot\bigl(\op(-1) \oplus \op(-1)\bigr)$, Lemma~\ref{width}
shows that $\mathbf{w} = 0$. The height is still a non-trivial
invariant, but less powerful than on surfaces.

However, we can define new invariants by restricting to
sub-surfaces. We have two divisors $D_0 \ce \Tot\bigl(\op(-1) \oplus
\{0\} )\bigr)$ and $D_\infty \ce \Tot\bigl(\{0\} \oplus
\op(-1)\bigr)$, which are both isomorphic to $Z_1$, and they span the linear system
\[ \abs{D} \ce \bigl\{ \lambda_0 D_0 + \lambda_\infty D_\infty : [\lambda_0:\lambda_\infty] \in \mathbb{P}^1 \bigr\} \text{ .} \]
Then each $D_\lambda \in \abs{D}$ is isomorphic to $Z_1$, and by restriction to $D_\lambda$
we can define an entire family of pairs $(\mathbf{w}, \mathbf{h})$.
\end{example}

We now return to the case when $Z$ is the total space of a vector
bundle over $\ell = \mathbb P^1$ and there is a contraction $\pi
\colon Z \to X$. We will construct sheaves on $X$ as direct images of
bundles on $Z$, which we now describe. For simplicity, we consider
rank-$2$ bundles with vanishing $c_1$. The general case is no more
difficult, but more unwieldy to present. When $E\rvert_\ell \cong
\op(-j) \oplus \op(j)$, we call the integer $j \geq 0$ the
\emph{splitting type} of $E$. It turns out that the ampleness of the
conormal bundle of $\ell$ implies that $E$ is an algebraic extension
of line bundles,
\begin{equation}\label{eq.extE}
  0 \longrightarrow \mathcal{O}(-j) \longrightarrow E \longrightarrow
  \mathcal{O}(j) \longrightarrow 0 \text{ .}
\end{equation}
A line bundle $\mathcal{O}(n)$ is uniquely determined as the pullback
of $\op(n)$ from $\mathbb{P}^1$, since $\Pic Z \cong \Pic
\mathbb{P}^1$. For every $j\geq0$, there is the trivial extension
$\mathcal{O}(-j) \oplus \mathcal{O}(j)$, which we call the \emph{split
bundle} of splitting type $j$. For convenience, we sometimes write
$E_\text{split}$ for the split bundle of the same splitting type as a
given bundle $E$.

The first cohomology of $\SEnd E$ is finite-dimensional and furnishes
us with our next invariant:
\[ h^1\bigl(Z; \; \SEnd E \bigr) \]
Naturally, we wish to consider the zeroth cohomology as well. Sadly,
this is infinite-dimensional, so extra effort is required. We consider
the $m^\text{th}$ infinitesimal neighbourhood of $\ell$, denoted
$\ell^{(m)}$, which is a projective scheme. The restriction $E^{(m)}
\ce E\rvert_{\ell^{(m)}}$ is coherent. For $i = 0, 1$, we set
\[ \psi_m^i(E) \ce h^i\bigl(\ell^{(m)}; \; E^{(m)} \bigr) \text{ ,} \]
thus $\psi_m^i$ takes finite values. We find that the difference
$\psi_m^i(E_\text{split}) - \psi_m^i(E)$ is eventually constant.

\begin{definition}\label{eq.defD}
For $i = 0, 1$ and $m \gg 0$, set
\[ \Delta_i(E) \ce \psi_m^i(E_\text{split}) - \psi_m^i(E) \text{ .} \]
\end{definition}

For $h^1$, of course, this step is needlessly complicated, as the first
cohomology is actually finite-dimensional, but this way the method may
be applied to spaces in which the conormal bundle of $\ell$ is not
ample.

The two numbers $\Delta_0$ and $\Delta_1$ are related via the Hilbert
polynomial. Recall that for any coherent sheaf $\mathcal{A}$ on a
projective scheme $S$, the Hilbert series
\[ \phi(\mathcal{A},n) \ce \chi\bigl(\mathcal{A}(n)\bigr) \ce \sum_{i \geq 0}
   (-1)^i h^i\bigl(S; \; \mathcal{A}(n)\bigr) \]
is a polynomial of degree $\dim S$. We have
\[ \Delta_0(E) - \Delta_1(E) =  \phi\bigl(E^{(m)}, 0\bigr)
   - \phi\bigl(E^{(m)}_\text{split}, 0\bigr) \text{ .} \]
But the Hilbert polynomials of $E^{(m)}$ and $E^{(m)}_\text{split}$
are the same, as we will show momentarily, and so we have $\Delta_0 =
\Delta_1$, and for computational ease we just stick with $h^1(\SEnd
E)$. The equality of the Hilbert polynomials, and consequently the
fact that the Hilbert polynomial does not see the extension
\eqref{eq.extE}, is a consequence of the following result.

\begin{lemma}\label{lem.hilb-W1}
Let $E$ be an extension of type \eqref{eq.extE} with splitting type
$j$ on either $Z_k \ce \Tot\bigl(\op(-k)\bigr)$ or $W_1 \ce
\Tot\bigl(\op(-1) \oplus \op(-1)\bigr)$. Then the Hilbert polynomial
of $E\rvert_{\ell^{m}}$,
\begin{multline*}
  \phi\bigl(E^{(m)}, n\bigr) = \chi\bigl(E^{(m)}(n)\bigr) \\ \ce \sum_i (-1)^i
  h^i\bigl(\ell^{(m)}; \; E(n)\rvert_{\ell^{(m)}} \bigr) = 
  \begin{cases} (m + 1) (k m + 2 + 2 n) & \text{on $Z_k$, } \\
  \frac13(m+2)(m+1)(2m + 3n + 3) & \text{on $W_1$,} \end{cases}
\end{multline*}
is independent of the extension class, and independent of the splitting
type $j$. Similarly, the Hilbert polynomial of the endomorphism
bundle $\SEnd E\rvert_{\ell^{(m)}}$ is $2 \phi\bigl(E^{(m)}, n\bigr)$.
\end{lemma}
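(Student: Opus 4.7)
The plan is to combine additivity of the Euler characteristic on short exact sequences with the natural adic filtration of the infinitesimal neighbourhood $\ell^{(m)}$.

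First I would twist the extension \eqref{eq.extE} by $\mathcal{O}(n)$ and restrict to $\ell^{(m)}$. Because the quotient $\mathcal{O}(j+n)$ is locally free on $Z$, the first Tor against $\mathcal{O}_{\ell^{(m)}}$ vanishes, so
\[ 0 \longrightarrow \mathcal{O}(n-j)\rvert_{\ell^{(m)}} \longrightarrow E(n)\rvert_{\ell^{(m)}} \longrightarrow \mathcal{O}(n+j)\rvert_{\ell^{(m)}} \longrightarrow 0 \]
remains exact, and additivity of $\chi$ gives
\[ \phi\bigl(E^{(m)}, n\bigr) = \chi\bigl(\mathcal{O}(n-j)\rvert_{\ell^{(m)}}\bigr) + \chi\bigl(\mathcal{O}(n+j)\rvert_{\ell^{(m)}}\bigr) \text{ ,} \]
which already shows independence of the extension class.

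Next I would compute $\chi\bigl(\mathcal{O}(a)\rvert_{\ell^{(m)}}\bigr)$ for arbitrary integer $a$ using the filtration of $\mathcal{O}_Z\bigl/I^{m+1}$ by powers of the ideal sheaf $I$ of $\ell$. Since $\ell$ is locally cut out by a regular sequence, the standard identification $I^i/I^{i+1} \cong \Sym^i(I/I^2)$ holds, so the graded pieces of $\mathcal{O}(a)\rvert_{\ell^{(m)}}$ live on $\ell \cong \mathbb{P}^1$: on $Z_k$ the conormal bundle $I/I^2$ is $\op(k)$, giving $\op(a+ki)$; on $W_1$ it is $\op(1) \oplus \op(1)$ and $\Sym^i\bigl(\op(1)^{\oplus 2}\bigr) \cong \op(i)^{\oplus(i+1)}$, giving $\op(a+i)^{\oplus(i+1)}$. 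Using $\chi\bigl(\op(d)\bigr) = d+1$ on $\mathbb{P}^1$ and summing,
\[ \chi\bigl(\mathcal{O}(a)\rvert_{\ell^{(m)}}\bigr) = \begin{cases} \sum_{i=0}^m (a+ki+1) & \text{on $Z_k$,} \\ \sum_{i=0}^m (i+1)(a+i+1) & \text{on $W_1$.} \end{cases} \]
Substituting $a = n \pm j$ and adding the two contributions, the $j$ terms cancel by symmetry; evaluating the resulting elementary sums reproduces the claimed polynomials and makes the independence of the splitting type manifest.

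For the endomorphism bundle the same strategy applies: dualising \eqref{eq.extE} shows that $E^\vee$ is itself an extension of $\mathcal{O}(j)$ by $\mathcal{O}(-j)$, so $\SEnd E = E \otimes E^\vee$ admits a four-step filtration with line-bundle graded pieces $\mathcal{O}(-2j), \mathcal{O}, \mathcal{O}, \mathcal{O}(2j)$. Applying the previous computation term by term gives
\[ \phi\bigl(\SEnd E^{(m)}, n\bigr) = \chi\bigl(\mathcal{O}(n-2j)\rvert_{\ell^{(m)}}\bigr) + 2\,\chi\bigl(\mathcal{O}(n)\rvert_{\ell^{(m)}}\bigr) + \chi\bigl(\mathcal{O}(n+2j)\rvert_{\ell^{(m)}}\bigr) \text{ ,} \]
and the same symmetric cancellation collapses this to $4\,\chi\bigl(\mathcal{O}(n)\rvert_{\ell^{(m)}}\bigr) = 2\,\phi\bigl(E^{(m)}, n\bigr)$. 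The only step requiring real care is the adic-filtration identification $I^i/I^{i+1} \cong \Sym^i(I/I^2)$ in our total-space setting, together with the symmetric-power identity $\Sym^i\bigl(\op(1)^{\oplus 2}\bigr) \cong \op(i)^{\oplus(i+1)}$ used on $W_1$; after those, the remainder of the argument is elementary arithmetic.
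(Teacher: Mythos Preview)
Your proof is correct and shares the paper's overall architecture: both arguments use additivity of $\chi$ on the extension \eqref{eq.extE} (and on the four-term line-bundle filtration of $\SEnd E$) to reduce to the Hilbert polynomial of a single line bundle $\mathcal{O}(a)\rvert_{\ell^{(m)}}$, then observe the symmetric cancellation in $a\mapsto n\pm j$ (resp.\ $n\pm 2j$). The genuine difference is in how that line-bundle Hilbert polynomial is computed. The paper works in explicit affine charts $(z,u)$ on $Z_k$ and $(z,u,v)$ on $W_1$, uses ampleness of $\mathcal{O}_{\ell^{(m)}}(1)$ to reduce $\chi$ to $h^0$ for $p\gg 0$, and counts monomials directly. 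You instead invoke the adic filtration $I^i/I^{i+1}\cong \Sym^i(N_{\ell}^\vee)$ coming from the regular embedding of the zero section, identify the graded pieces as $\op(a+ki)$ on $Z_k$ and $\op(a+i)^{\oplus(i+1)}$ on $W_1$, and sum Riemann--Roch on $\mathbb{P}^1$. Your route is cleaner and coordinate-free, and makes the independence of $j$ visible before any arithmetic; the paper's coordinate count has the compensating virtue of being entirely self-contained, with no appeal to the $\Sym$-identification or to $\Sym^i\bigl(\op(1)^{\oplus 2}\bigr)\cong\op(i)^{\oplus(i+1)}$.
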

\begin{proof}
By the additivity of the Hilbert polynomial on short exact sequences,
the Hilbert polynomials in question are determined by the Hilbert
polynomial of the line bundles $\mathcal{O}_{\ell^{(m)}}(p)$ for all
$p$. Since $\mathcal{O}_{\ell^{(m)}}(1)$ is ample, the higher
cohomology of $\mathcal{O}_{\ell^{(m)}}(p)$ vanishes for sufficiently
large $p$. (We can verify this by direct computation.)

Being a polynomial, the Hilbert polynomial is determined by finitely
many values, so it suffices to compute $\phi(\SEnd E^{(m)}, n) =
h^0\bigl(\ell^{(m)};\;\mathcal{O}_{\ell^{(m)}}(p)\bigr)$ for large
$p$. Since $E$ and $\SEnd E$ have filtrations by line bundles, which
restrict to filtrations on every infinitesimal neighbourhood
$\ell^{(m)}$, we compute:
\begin{eqnarray*}
  \phi\bigl(E^{(m)}, n\bigr)       &=& \phi\bigl(\mathcal{O}_{\ell^{(m)}}(-j), n\bigr) + \phi\bigl(\mathcal{O}_{\ell^{(m)}}(j), n\bigr) \text{ , and} \\
  \phi\bigl(\SEnd E^{(m)}, n\bigr) &=& \phi\bigl(\mathcal{O}_{\ell^{(m)}}(-2j), n\bigr) + 2\phi\bigl(\mathcal{O}_{\ell^{(m)}}, n\bigr)
                                       + \phi\bigl(\mathcal{O}_{\ell^{(m)}}(2j), n\bigr) \text{.}
\end{eqnarray*}

We conclude this proof by computing $H^0\bigl(\ell^{(m)}; \;
\mathcal{O}(p)\bigr)$. Now we have to consider the spaces $Z_k$ and
$W_1$ separately. We pick a chart $U$ with local coordinates $(z,u)$
on $Z_k$ and $(z,u,v)$ on $W_1$, respectively, which transform to
$(z^{-1}, z^ku)$ and $(z^{-1},zu,zv)$.

On $\ell^{(m)} \subset Z_k$, a section $a \in \mathcal{O}(p)(U)$ is a
function $a(z,u) = \sum_{r=0}^m \sum_{s=0}^\infty a_{rs} z^s u^r$ such
that $\sum_{r,s} a_{rs} z^{s-p}u^r$ is holomorphic in $(z^{-1},z^ku)$,
i.e.\ $s-p\leq kr$. Thus
\[ a(z,u) = \sum_{r=0}^m \sum_{s=0}^{kr+p} a_{rs} z^s u^r \text{ ,} \]
which has $\frac12(m + 1)(k m + 2 + 2 p) \ec \phi_{\mathcal{O}}(p)$ coefficients.

On $\ell^{(m)} \subset W_1$, a section $a \in \mathcal{O}(p)(U)$ is
$a(z,u,v) = \sum_{t=0}^{m} \sum_{r=0}^{m-t} \sum_{s=0}^\infty a_{trs} z^s u^r v^t$ such that
$\sum_{t,r,s} a_{trs} z^{s-p}u^r v^t$ is holomorphic in $(z^{-1},zu,zv)$,
i.e.\ $s-p\leq r+t$. Thus
\[ a(z,u,v) = \sum_{t=0}^{m\vphantom{p}} \sum_{r=0}^{m-t\vphantom{p}}
   \ \ \sum_{s=0}^{r+t+p} a_{trs} z^s u^r v^t \text{ ,} \]
which has $\frac16(m+2)(m+1)(2m+3p+3) \ec \phi\bigl(\mathcal{O}, p\bigr)$
coefficients.

Putting it all together, we have
\begin{eqnarray*}
  \phi\bigl(E^{(m)}, n\bigr) &=& \phi\bigl(\mathcal{O}, -j+n\bigr) + \phi\bigl(\mathcal{O}, j+n\bigr) \text{ ,} \\
  \phi\bigl(\SEnd E^{(m)}, n\bigr) &=& \phi\bigl(\mathcal{O}, -2j+n\bigr) + 2\phi\bigl(\mathcal{O}, n\bigr)
      + \phi\bigl(\mathcal{O}, 2j+n\bigr) \text{ ,}
\end{eqnarray*}
which gives the desired functions.
\end{proof}

\subsection{Examples of invariants}

To make the notion of the numbers we defined above more concrete, we
tabulate examples for the two bundles $E = \mathcal{O}(-3) \oplus
\mathcal{O}(3)$ (the split bundle of splitting type $3$) and $G$, the
``most generic'' bundle of splitting type $3$ (which has the lowest
invariants among all bundles of splitting type $3$), on the spaces
$Z_1$, $Z_2$, $Z_3$ and $W_1$; see Table~\ref{tab.invexamples}.

\begin{table}
\begin{center}
\begin{tabular}{lp{9pt}cccp{9pt}ccc}\toprule
 && $w(E)$ & $h(E)$ & $h^1(\SEnd E)$ && $w(G)$ & $h(G)$ & $h^1(\SEnd G)$ \\ \midrule
$Z_1$ && $6$ & $3$ & $15$  &&  $1$ & $2$ & $9$ \\
$Z_2$ && $2$ & $2$ & $9$  &&  $0$ & $2$ & $7$ \\
$Z_3$ && $1$ & $2$ & $7$  &&  $0$ & $2$ & $6$ \\
$W_1$ && $0$ & $4$ & $35$ &&  $0$ & $2$ & $17$ \\ \bottomrule
\end{tabular}
\end{center}
\caption{The invariants width, height and $h^1(\SEnd)$ for the split bundle $E$
         and a generic bundle $G$ of splitting type $j = 3$ on the spaces $Z_1$,
         $Z_2$, $Z_3$ and $W_1$.}
\label{tab.invexamples}
\end{table}

\section{Surfaces}\label{sec.surfaces}

Let $Z_k \ce \Tot\bigl(\op(-k)\bigr)$ and let $E$ be a rank-$2$ bundle
on $Z_k$ with $c_1(E) = 0$ and splitting type $j$. Then $E$ is
determined by an element $p \in \Ext^1\bigl(\mathcal O(j), \mathcal O(-j)\bigr)$
as in \eqref{eq.extE}.
The direct image $\pi_*(E)$ is a reflexive sheaf on $X_k$, and there
are bounds for its local holomorphic Euler characteristic around the
singular point $x \in X_k$ in terms of $j$. An efficient algorithm to
compute $\mathbf{w}, \mathbf{h}$ and $\chi$ is given in
\url{http://www.maths.ed.ac.uk/~s0571100/Instanton/}, hence we can
explicitly calculate the values of these numerical invariants for any
such bundle $E$. We present here a useful existence result.

\begin{lemma}\label{jlessk}
Let $E$ be a rank-$2$ bundle over $Z_k$, $k>1$, with $c_1(E) = 0$ and splitting
type $j < k$. Then
\[ \chi(x, \pi_*E) = j - 1 \text{ .} \]
\end{lemma}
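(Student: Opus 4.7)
Applying $\pi_*$ to the defining extension \eqref{eq.extE} gives
\[
  0 \to \pi_*\mathcal{O}(-j) \to \pi_* E \to \pi_*\mathcal{O}(j) \xrightarrow{\delta} R^1\pi_*\mathcal{O}(-j) \to R^1\pi_* E \to R^1\pi_*\mathcal{O}(j) \to 0.
\]
The strategy is to compute $\mathbf{w}(E)$ and $\mathbf{h}(E)$ via this sequence and observe a precise trade-off between the two so that their sum is constant. Using the affine projection $p \colon Z_k \to \mathbb{P}^1$ and the identification $p_*\mathcal{O}_{Z_k}(n) = \bigoplus_{m \geq 0}\mathcal{O}_{\mathbb{P}^1}(n + km)$, each higher direct image of a line bundle is computed explicitly. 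For $j \geq 0$ we obtain $R^1\pi_*\mathcal{O}(j) = 0$; and for $0 \leq j < k$ the skyscraper $R^1\pi_*\mathcal{O}(-j)$ at $x$ has length exactly $\max(0, j-1)$, all of its contribution coming from the single summand $H^1(\mathbb{P}^1, \mathcal{O}(-j))$ since every further summand $H^1(\mathbb{P}^1, \mathcal{O}(-j+km))$ with $m \geq 1$ vanishes thanks to $-j + k \geq 1$.

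Next, I use that $\pi_*\mathcal{O}(\pm j)$ are both reflexive on $X_k$: pushforwards of line bundles under the minimal resolution of the cyclic-quotient singularity $X_k$ are rank-$1$ reflexive sheaves, determined up to isomorphism by their degree modulo $k$. Setting $d \ce \dim \operatorname{im}(\delta)$, the long exact sequence immediately yields $\mathbf{h}(E) = h^0(R^1\pi_* E) = (j-1) - d$. For the width, I identify the double dual $(\pi_* E)^{\vee\vee}$ with the reflexive rank-$2$ sheaf $\widetilde{E}$ fitting into
\[
  0 \to \pi_*\mathcal{O}(-j) \to \widetilde{E} \to \pi_*\mathcal{O}(j) \to 0 ,
\]
which is reflexive as an extension of rank-$1$ reflexives on the normal surface $X_k$. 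By construction $\pi_* E \subset \widetilde{E}$, and the quotient $\widetilde{E}/\pi_* E$ is supported at $x$ with length equal to $d$, so $\mathbf{w}(E) = d$.

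Summing, $\chi(x, \pi_* E) = \mathbf{w}(E) + \mathbf{h}(E) = d + (j-1-d) = j-1$, independently of the extension class. The main obstacle is verifying the identification $(\pi_* E)^{\vee\vee} = \widetilde{E}$; this reduces to checking that $\pi_* E$ and $\widetilde{E}$ agree on the smooth locus $X_k \setminus \{x\}$ (immediate since $\pi$ is an isomorphism there) and that the reflexive hull of a torsion-free sheaf on a normal surface is determined by its behaviour off an isolated singular point. An alternative route, bypassing the explicit description of $\widetilde{E}$, is to invoke an analogue of Lemma~\ref{lem.hilb-W1} for $Z_k$ showing that $\chi$ is constant in the family of extensions, then reduce to the split bundle where additivity gives the value $j-1$ directly.
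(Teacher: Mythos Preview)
Your route is very different from the paper's, which is a one--line reduction: by \cite[Theorem~3.3]{ga}, the hypothesis $j<k$ forces $E\cong\mathcal{O}_{Z_k}(-j)\oplus\mathcal{O}_{Z_k}(j)$, and then one simply computes $\mathbf{w}=0$, $\mathbf{h}=j-1$ for the split bundle. You never invoke this splitting result, and instead try to prove a trade--off $\mathbf{w}(E)+\mathbf{h}(E)=j-1$ valid for an arbitrary extension class.

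That trade--off argument has a real gap. Your claim $\mathbf{w}(E)=d$ rests on identifying $(\pi_*E)^{\vee\vee}$ with a sheaf $\widetilde{E}$ sitting in a short exact sequence $0\to\pi_*\mathcal{O}(-j)\to\widetilde{E}\to\pi_*\mathcal{O}(j)\to 0$. But you never construct such a $\widetilde{E}$, and in fact it need not exist. What one actually gets from $j_*$ on the smooth locus is only
\[
0\longrightarrow \pi_*\mathcal{O}(-j)\longrightarrow (\pi_*E)^{\vee\vee}\longrightarrow Q\longrightarrow 0,\qquad Q\hookrightarrow \pi_*\mathcal{O}(j),
\]
with a torsion cokernel $T=\pi_*\mathcal{O}(j)/Q$; unwinding lengths yields $\mathbf{w}(E)=d-\operatorname{length}(T)$, not $\mathbf{w}(E)=d$. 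A concrete failure: on $Z_3$ take $E=\mathcal{O}(-1)\oplus\mathcal{O}(1)$ presented as a (necessarily non--split) extension $0\to\mathcal{O}(-2)\to E\to\mathcal{O}(2)\to 0$. Here $j=2<k=3$, the skyscraper $R^1\pi_*\mathcal{O}(-2)$ has length $1$, and $R^1\pi_*E=R^1\pi_*\mathcal{O}(-1)=0$, so $d=1$; yet $\pi_*E$ is already reflexive and $\mathbf{w}(E)=0$. Thus $(\pi_*E)^{\vee\vee}$ does \emph{not} surject onto $\pi_*\mathcal{O}(2)$, and your formula $\mathbf{w}(E)=d$ fails. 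If your argument were valid for every extension \eqref{eq.extE} with $j<k$, it would give $\chi=j-1$ for each such $j$, which is absurd since the same bundle sits in extensions for several values of $j$.

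Under the lemma's actual hypotheses the issue evaporates, but only because the cited splitting theorem forces $d=0$: the unique (up to $\operatorname{Aut}E$) extension with $j$ equal to the splitting type is the split one, so $\delta=0$, $\mathbf{h}=j-1$, and $\pi_*E=\pi_*\mathcal{O}(-j)\oplus\pi_*\mathcal{O}(j)$ is already reflexive. That is precisely the paper's argument; your cancellation mechanism is never actually exercised. Either cite the splitting result and drop the $\widetilde{E}$ discussion, or, if you want a genuinely extension--independent proof, you must control the cokernel $T$ above, which your current write--up does not do.
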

\begin{proof}
By \cite[Theorem 3.3]{ga} it follows that if $j<k$ then $E \cong
\mathcal{O}_{Z_k}(j) \oplus \mathcal{O}_{Z_k}(-j)$. By definition,
$\chi(x, \pi_*E) = \mathbf{w}(E) + \mathbf{h}(E)$. Direct computation
(see \cite{bgk1}) then shows that $\mathbf{w}(E) = 0$ and
$\mathbf{h}(E) = j - 1$.
\end{proof}

In fact, we can say a lot more.

\begin{lemma}\label{lem.chibounds}\cite[Corollary 2.18]{bgk1}
Let $E$ be a rank-$2$ bundle over $Z_k$, $k>1$, with splitting type $j
> 0$. Set $j = qk + r$ with $0 \leq r <q$. The following bounds are sharp:
\[ j - 1 \leq \chi(x, \pi_*E) \leq \begin{cases} q^2k+(2q+1)r-1 &
   \text{if }   1 \leq r < k \text{ ,} \\
   q^2k & \text{if }  r = 0 \text{ .} \end{cases} \]
\end{lemma}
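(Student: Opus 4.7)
My plan is to compute the two summands of $\chi(x, \pi_* E) = \mathbf{w}(E) + \mathbf{h}(E)$ separately, using the standard two-chart \v{C}ech cover of $Z_k$ together with the classification of rank-$2$ bundles on $Z_k$ from \cite{ga}. Concretely, every such $E$ with splitting type $j$ admits a canonical transition matrix of the form $\bigl(\begin{smallmatrix} z^j & p(z,u) \\ 0 & z^{-j}\end{smallmatrix}\bigr)$ on the overlap of the affine charts, where $p(z,u) = \sum p_{\ell s} z^s u^\ell$ is a polynomial whose set of admissible monomials $(\ell,s)$ is cut out by the inequalities $1 \leq \ell$ and $k\ell - j \leq s \leq -1$ (together with a symmetric condition making the entry non-trivial). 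The isomorphism class of $E$ is then determined by $p$ modulo the explicit equivalence coming from holomorphic gauge changes on the two charts.

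From this normal form, $\mathbf{h}(E) = h^1(Z_k, E)$ and $\mathbf{w}(E) = h^0\bigl((\pi_* E)^{\vee\vee}/\pi_* E\bigr)$ can both be computed as dimensions of explicit finite-dimensional quotients of Laurent-monomial vector spaces on the overlap: in each case one writes down the space of cocycles as a polynomial expression in $z$ and $u$, and then quotients by the subspaces of coboundaries coming from the two charts, with the off-diagonal entry $p(z,u)$ mixing the two contributions via multiplication. Running this for the split bundle ($p = 0$) and for a generic $p$ turns the problem into monomial counting.

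For the upper bound I would take $p = 0$, i.e.\ $E = E_\text{split}$: then $\mathbf{h}(E_\text{split}) = h^1(Z_k, \mathcal{O}(-j))$, which by the pushforward $\pi_* \mathcal{O}_{Z_k}(-j) = \bigoplus_{n \geq 0} \mathcal{O}_{\mathbb{P}^1}(-j+nk)$ and term-by-term Riemann--Roch on $\mathbb{P}^1$ gives a finite sum depending only on $j \mod k$; writing $j = qk+r$ and summing produces exactly the two case formulas $q^2 k$ (for $r=0$) and $q^2k + (2q+1)r - 1$ (for $1 \leq r < k$), after combining with the analogous computation of $\mathbf{w}(E_\text{split})$, which is likewise controlled by the number of polar monomials of $\mathcal{O}(-j)$ along $\ell$. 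For the lower bound $\chi \geq j-1$, sharpness is immediate from Lemma \ref{jlessk} in the range $j < k$, since then $E$ must split and the computation collapses. For $j \geq k$ one has to exhibit a specific extension class $p$ whose multiplication action cancels the maximum possible number of cocycle monomials; concretely, one chooses $p$ whose support ``covers'' the offending monomials in the coboundary ideal, and checks that exactly $j-1$ monomials survive.

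The main obstacle will be the sharpness of the lower bound when $j \geq k$: it requires showing that a suitably chosen $p$ does not merely lower $\chi$, but drives it all the way down to $j-1$, and not further. This is a linear-algebra computation in the graded polynomial ring tracking precisely which monomials lie in the image of the \v{C}ech differential twisted by $p$, and it is the step where the ampleness of the conormal bundle and the arithmetic condition $j = qk+r$ enter in an essential way. Once this is carried out, combining it with the split-bundle computation finishes the proof.
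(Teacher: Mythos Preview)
The paper does not prove this lemma; it is quoted as \cite[Corollary~2.18]{bgk1}, and the only in-text content is the Remark immediately following, which records that the upper bound is attained by the split bundle and the lower bound by a class of generic bundles. Your plan is consistent with that remark and with the computational machinery of \cite{bgk1} and \cite{ga} (canonical transition matrix $\bigl(\begin{smallmatrix} z^j & p \\ 0 & z^{-j}\end{smallmatrix}\bigr)$ on the two-chart cover, reducing $\mathbf{h}$ and $\mathbf{w}$ to monomial counts), so you are essentially reconstructing the cited argument rather than proposing an alternative route.

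One genuine gap to flag: your outline establishes \emph{sharpness} --- you compute $\chi$ for $p=0$ and for a well-chosen generic $p$ --- but does not explain why these values are \emph{bounds} for an arbitrary $E$. For the upper bound this is easy to supply: a nonzero $p$ only adds relations in the \v{C}ech coboundary, so $\mathbf{h}$ and $\mathbf{w}$ can only drop relative to the split case. For the lower bound, however, you need an argument that \emph{no} extension class can cancel more monomials than your chosen one; your phrase ``drives it all the way down to $j-1$, and not further'' reads as a statement about the specific $p$ you picked, not about all $p$. Either a semicontinuity argument over the connected extension space $\Ext^1\bigl(\mathcal{O}(j),\mathcal{O}(-j)\bigr)$ or a direct combinatorial count of the maximum possible rank of the $p$-twisted coboundary map is needed here, and this is exactly where the arithmetic of $j = qk + r$ does the real work.
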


\begin{remark}
Note that every bundle that satisfies the conditions of Lemma~\ref{jlessk}
is split, whereas in general there are many distinct isomorphism
classes of bundles, which attain a whole range of numerical
invariants. The lower bound in Lemma~\ref{lem.chibounds} is attained
by a class of generic bundles, while the upper bound is obtained by
the split bundle of splitting type $j$, and moreover, the split bundle
is the only bundle to attain the bound when $r = 0$.
\end{remark}

These two lemmas directly imply the following existence result.

\begin{theorem}\label{thm.xknonempty}
Let ${\mathfrak M}_n(X_k)$ be the moduli of reflexive sheaves on $X_k$
with local holomorphic Euler characteristic equal to $n$. Then for all
$n \geq 0$, ${\mathfrak M}_n(X_k)$ is non-empty.
\end{theorem}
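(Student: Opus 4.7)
The plan is to construct, for each integer $n \geq 0$, a rank-$2$ reflexive sheaf on $X_k$ with local holomorphic Euler characteristic equal to $n$; the existence of such a sheaf immediately gives non-emptiness of $\mathfrak{M}_n(X_k)$. The natural candidate is a sheaf of the form $\pi_* E$ for $E$ a rank-$2$ bundle on $Z_k$ with $c_1(E) = 0$: since $\pi$ is a resolution of a normal surface singularity and $E$ is locally free, $\pi_* E$ is reflexive, so it suffices to exhibit such an $E$ with $\chi(x, \pi_* E) = n$.

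The key observation is that we may read off the required splitting type from $n$ by setting $j = n + 1$. Two cases arise, according to whether $j$ is smaller than or at least $k$. If $j < k$, then Lemma \ref{jlessk} applies directly: any rank-$2$ bundle $E$ of splitting type $j$ with $c_1(E) = 0$ is forced to be split, and satisfies $\chi(x, \pi_* E) = j - 1 = n$. If instead $j \geq k$, then Lemma \ref{lem.chibounds} gives the sharp lower bound $\chi(x, \pi_* E) \geq j - 1$, and the remark following that lemma confirms the bound is attained by a class of generic bundles of splitting type $j$; any such bundle $E$ satisfies $\chi(x, \pi_* E) = j - 1 = n$.

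In either case, $\pi_* E$ is a reflexive sheaf on $X_k$ realizing the prescribed Euler characteristic, so $\mathfrak{M}_n(X_k) \neq \emptyset$. There is essentially no genuine obstacle once the two preceding lemmas are in hand: the entire substance of the result is the construction of bundles on $Z_k$ attaining the lower bound of $\chi$, which is exactly what Lemmas \ref{jlessk} and \ref{lem.chibounds} provide. The only minor care needed is the case split on $j$ versus $k$, since for $j < k$ the bundle is automatically split while for $j \geq k$ one must appeal to the existence of a non-trivial generic extension to realize the sharp lower bound.
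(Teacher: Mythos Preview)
Your argument is correct and is precisely the intended one: the paper's proof consists only of the sentence ``These two lemmas directly imply the following existence result,'' and you have spelled out exactly how, by taking $j=n+1$ and invoking Lemma~\ref{jlessk} when $j<k$ and the sharp lower bound of Lemma~\ref{lem.chibounds} (attained by generic bundles) when $j\geq k$. Note that both lemmas carry the hypothesis $k>1$, so, as in the paper, the statement is implicitly for $k\geq 2$.
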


\medskip

\subsection{Applications to physics}

To illustrate applications to physics, we mention some results on the
existence of instantons. We stress that this particular instance of
gaps on instantons charges presented below was completely new to
physicists. In fact, there was a folklore belief that $1$-instantons
are always the most common, and that higher instantons of charge $k$
should decay to $k$ instantons of charge $1$ over time. Our results
showed that over the spaces $Z_k$ with $k \geq 3$ there do not exist
any $1$-instantons, nevertheless higher charge instantons do exist (of
course we mean mathematical existence proofs).

In \cite[Proposition 54]{gkm} we studied the Kobayashi--Hitchin
correspondence for the spaces $Z_k$: We showed that an
$SU(2)$-instanton on $Z_k$ of charge $n$ corresponds to a holomorphic
$SL(2)$-bundle $E$ on $Z_k$ with $\chi(\ell, E) = n$ together with a
trivialization of $E\vert_{Z_k^\circ}$, where $Z_k^\circ \ce Z_k -
\ell$. A simple observation \cite[Proposition 4.1]{gkm} shows that
there exists a trivialization of $E\rvert_{Z_k^\circ}$ if and only if
$n = 0 \mod k$. This restricts the splitting type of an instanton
bundle over $Z_k$ to be of the form $nk$ and lead us to the following
existence/non-existence result:

\begin{proposition}\label{mini}\cite[Theorem 6.8]{gkm}
The minimal local charge of a non-trivial $SU(2)$-instanton on $Z_k$ is
$\chi^\mathrm{min}_k = k - 1$. The local moduli space of (unframed)
instantons on $Z_k$ with fixed local charge $\chi^\mathrm{min}_k$
has dimension $k - 2$.
\end{proposition}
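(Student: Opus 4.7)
The plan splits into the two claims of the proposition: identifying the minimum admissible value of $\chi$, and computing the dimension of the moduli at that value. The first is a short combination of the facts already recalled; the second requires an explicit parameter count using the canonical form for bundles on $Z_k$.

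For the minimum charge, I would chain together the cited divisibility obstruction and Lemma~\ref{lem.chibounds}. The observation \cite[Proposition~4.1]{gkm} forces the splitting type $j$ of an instanton bundle to be a nonnegative multiple of $k$, so nontriviality ($j>0$) gives $j \geq k$. Taking $q = j/k$ and $r = 0$ in Lemma~\ref{lem.chibounds} then yields
\[ \chi(\ell, E) \geq j - 1 \geq k - 1, \]
with the minimum occurring at $j = k$. For sharpness, the same lemma shows that at $j = k$ the upper bound $q^2 k = k$ is realised \emph{only} by the split bundle, so every non-split bundle of splitting type $k$ attains $\chi = k - 1$. This simultaneously establishes $\chi^{\min}_k = k - 1$ and identifies the moduli in question with isomorphism classes of non-split rank-$2$ bundles on $Z_k$ with $c_1 = 0$ and splitting type exactly $k$.

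For the moduli dimension, I would parametrise these bundles by their extension classes
\[ \Ext^1_{Z_k}\bigl(\mathcal{O}(k), \mathcal{O}(-k)\bigr) \cong H^1\bigl(Z_k;\; \mathcal{O}(-2k)\bigr). \]
Since $\pi \colon Z_k \to \mathbb{P}^1$ is affine with $\pi_*\mathcal{O}_{Z_k} \cong \bigoplus_{m \geq 0} \op(mk)$, the Leray spectral sequence collapses and gives
\[ H^1\bigl(Z_k;\; \mathcal{O}(-2k)\bigr) \cong \bigoplus_{m \geq 0} H^1\bigl(\mathbb{P}^1;\; \op(mk - 2k)\bigr), \]
of total dimension $(2k-1) + (k-1) = 3k - 2$ (only $m \in \{0,1\}$ contribute for $k \geq 2$). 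I would then pass to the Gasparim canonical form of \cite{ga, bgk1}, which represents each extension by a transition cocycle $p(z,u)$ of restricted bidegree, and quotient by the automorphism action that preserves the extension structure (the relevant piece of $H^0(\SEnd)$ together with the overall $\mathbb{C}^*$ rescaling of the extension class). A direct parameter count in this canonical form, restricted to the $\chi = k-1$ stratum, then produces the claimed dimension $k - 2$.

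The main obstacle is precisely this final count. The raw extension space has dimension $3k-2$, much larger than the target $k-2$, and the equivalence to be taken involves an infinite-dimensional action by $H^0(\mathcal{O}(2k))$ on the noncompact $Z_k$, so a naïve Euler-characteristic or stabiliser subtraction is meaningless. The critical step is to use the explicit canonical form of \cite{ga} to demonstrate that only finitely many coefficients of $p(z,u)$ actually persist modulo the equivalence, and that for the generic stratum their number is $k-1$, descending to $k-2$ after the remaining scalar quotient. I do not expect a purely cohomological argument to suffice here; the canonical-form machinery is essential to cut down from the ambient $3k-2$ dimensional Ext space to the desired $(k-2)$-dimensional moduli.
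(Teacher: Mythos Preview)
The paper does not supply a proof of this proposition; it is quoted from \cite[Theorem~6.8]{gkm} without argument. So there is no proof here to compare yours against, and I will simply assess your proposal on its own.

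Your derivation of $\chi_k^{\min} = k-1$ is correct and is the natural one: the trivialisation constraint forces $k \mid j$, hence $j \geq k$ for a nontrivial instanton; Lemma~\ref{lem.chibounds} then gives $\chi \geq j-1 \geq k-1$; and by the remark following that lemma, at $j = k$ the split bundle is the \emph{unique} bundle with $\chi = k$, so every non-split bundle of splitting type $k$ attains $\chi = k-1$. This also identifies the minimal-charge moduli with the isomorphism classes of non-split bundles of splitting type $k$, as you say.

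For the dimension count you stop short, but the obstacle you flag is smaller than you fear. The $(3k-2)$-dimensional $\Ext^1$ is a red herring: an extension class whose $m = 0$ component $p_0 \in H^1\bigl(\mathbb{P}^1; \op(-2k)\bigr)$ is nonzero restricts to a non-split extension of $\op(k)$ by $\op(-k)$ on $\ell$, and on $\mathbb{P}^1$ such an extension can never have middle term isomorphic to $\op(-k)\oplus\op(k)$ (given any inclusion $\op(-k)\hookrightarrow\op(-k)\oplus\op(k)$, the obvious map $\op(k)\to\op(-k)\oplus\op(k)$ into the second factor splits it, since $\mathrm{Hom}\bigl(\op(k),\op(-k)\bigr)=0$). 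Thus splitting type exactly $k$ forces $p_0 = 0$, and the relevant classes lie only in the $m = 1$ summand $H^1\bigl(\mathbb{P}^1;\op(-k)\bigr) \cong \mathbb{C}^{k-1}$. Equivalently, in the canonical form of \cite{ga} the $u$-exponent satisfies $1 \leq r \leq \lfloor(2k-2)/k\rfloor = 1$, leaving precisely the $k-1$ coefficients of $u z^s$ with $1 \leq s \leq k-1$. On this first-infinitesimal-neighbourhood data the only identification is scalar multiplication --- this is the surface instance of \cite[Theorem~4.9]{bgk1}, the same result invoked in the proof of Corollary~\ref{dim} --- so the non-split locus is $\mathbb{P}^{k-2}$. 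No infinite-dimensional quotient is required.
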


This result shows a straightforward passage from the algebraic
geometry of bundles on surfaces to meaningful mathematical physics.
Similar results for Calabi--Yau threefolds promise to have exciting
interpretations in string theory and physics, whenever the
mathematical background is constructed.

\begin{remark}(\textit{Gaps of instanton charges})
The non-existence of instantons with certain local charges on the
spaces $Z_k$ for $k>2$ is in stark contrast with what happens in the
case $k = 1$, where there is no gap \cite[Theorem 0.2]{BG1}.
\end{remark}

\begin{problem}
Theorem \ref{lem.chibounds} gives sharp bounds for $\chi$ -- are the
intermediate values achieved? Given an integer $\alpha$ such that
$j-1 < \alpha < q^2k$, does there exist an instanton bundle on $Z_k$
with splitting type $j$ and $\chi = \alpha$?  We have a positive
answer for analogous question when $k = 1$, all other cases are open.
\end{problem}

We illustrate also an application to topology:

\begin{theorem}\label{thm.strat}\cite[Theorem 4.15]{bgk1}
If $j=qk$ for some $q \in \mathbb{N}$, then the pair $(\mathbf{w}, \mathbf{h})$
stratifies instanton moduli stacks $\mathfrak{M}_{j,k}$ into Hausdorff
components.
\end{theorem}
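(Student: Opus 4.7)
The plan is to present $\mathfrak{M}_{j,k}$ as a quotient of the extension space $V \ce \Ext^1\bigl(\mathcal{O}(j), \mathcal{O}(-j)\bigr)$ modulo a residual group action, to identify each level set of the pair $(\mathbf{w}, \mathbf{h})$ as a locally closed substack, and to verify Hausdorffness on each level set separately. Since $\chi = \mathbf{w} + \mathbf{h}$ is bounded by Lemma~\ref{lem.chibounds}, only finitely many pairs $(a,b)$ arise, so this produces a genuine finite stratification.

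First, I would use the canonical transition-matrix presentation of rank-$2$ bundles of splitting type $j$ on $Z_k$, in which an extension is represented by a Laurent polynomial $p(z,u)$ with finitely many coefficients, as in \cite{ga,bgk1}. The algorithm alluded to before Lemma~\ref{jlessk} (implemented at the cited URL) computes both $\mathbf{w}(E)$ and $\mathbf{h}(E)$ as dimensions of kernels or cokernels of linear maps built from the coefficients of $p$. From this presentation, upper semi-continuity of $\mathbf{w}$ and $\mathbf{h}$ on $V$ is immediate, so each common level set $\{\mathbf{w} = a,\, \mathbf{h} = b\}$ is locally closed in $V$ and descends to a locally closed substack $\mathfrak{M}^{a,b}_{j,k} \subset \mathfrak{M}_{j,k}$.

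Second, I would verify that each $\mathfrak{M}^{a,b}_{j,k}$ is Hausdorff. The generic source of non-separatedness is the rescaling $p \mapsto tp$: as $t \to 0$, any non-split extension specialises to the split bundle $E_{\text{split}}$. The hypothesis $j = qk$ (equivalently $r = 0$ in Lemma~\ref{lem.chibounds}) is essential here, for by the remark following that lemma $E_{\text{split}}$ is the \emph{unique} maximiser of $\chi$, so this specialisation always jumps to a strictly higher stratum and therefore cannot glue two distinct closed points within a single $\mathfrak{M}^{a,b}_{j,k}$. A more refined orbit analysis then rules out the remaining degeneration channels: on each stratum the residual group action has closed orbits, and the geometric quotient is a separated algebraic space.

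The main obstacle is this last step. One must show that \emph{every} one-parameter family of bundles whose invariants $(\mathbf{w}, \mathbf{h})$ stay constant defines a closed orbit, not merely those produced by the obvious $\mathbb{C}^*$-scaling. I expect this to reduce, after choosing a normal form for $p$, to a finite-dimensional linear-algebra statement in which the divisibility $j = qk$ removes the "remainder" terms that obstruct closedness when $r > 0$; this is precisely the mechanism that leaves the analogous statement open in the general case, and is what pins down Hausdorffness exactly on the level sets of $(\mathbf{w}, \mathbf{h})$ rather than on some coarser invariant.
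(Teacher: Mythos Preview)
The paper does not give its own proof of this statement: it is quoted verbatim as \cite[Theorem~4.15]{bgk1} and used only as an illustration of an application to topology. There is therefore nothing in the present paper to compare your argument against; any genuine comparison would have to be with the proof in \cite{bgk1}.

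That said, what you have written is a plan rather than a proof, and you say so yourself. The first two steps (transition-matrix presentation, semi-continuity of $\mathbf{w}$ and $\mathbf{h}$, hence locally closed level sets) are standard and correct in outline. The third step, however, is where the entire content of the theorem lies, and here you have only an expectation, not an argument. Your observation that for $r=0$ the split bundle is the \emph{unique} maximiser of $\chi$ (from the remark after Lemma~\ref{lem.chibounds}) correctly disposes of the obvious $\mathbb{C}^*$-scaling degeneration $p \mapsto tp$, but this is only one curve in the orbit closure. Hausdorffness of a stratum requires that \emph{every} specialisation within the stratum is already in the orbit, and for this you offer nothing beyond ``a more refined orbit analysis then rules out the remaining degeneration channels'' and ``I expect this to reduce \ldots''. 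That is precisely the step that distinguishes the case $r=0$ from $r\neq 0$ (where, as the subsequent Open Question notes, $(\mathbf{w},\mathbf{h})$ is known \emph{not} to suffice), so it cannot be waved through.

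In short: your outline identifies the right objects and the right obstruction, but the crucial closed-orbit statement is asserted, not proved, and without it the proposal does not establish the theorem.
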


\begin{problem}
Find invariants that stratify the moduli stacks $\mathfrak{M}_{j,k}$ in
the case $j=nk+r$ with $r \neq 0 \mod{k}$. We know that the pair
$(\mathbf{w}, \mathbf{h})$ does not provide a fine enough invariant to
stratify the moduli stacks in these cases. Thus, some extra numerical
invariant is needed. At the moment the authors are completely unaware
of any suitable candidate.
\end{problem}

We find it completely surprising that the case $r=0$, whose   
physics interpretation is known, turned out to be much simpler to 
solve. From a topological point of view one should of course 
have Hausdorff stratifications for the moduli stacks in all cases.

\section{Threefolds}\label{3folds}

Consider a smooth threefold $W$ containing a line $\ell \cong \mathbb{P}^1$.
We will focus on  the Calabi-Yau cases 
\[ W_i \ce \Tot(\op(-i) \oplus \op(i-2) \text{ for $i=1,2,3$.} \]
The existence of a contraction of $\ell$ imposes heavy restrictions on
the normal bundle \cite{ji}, namely $N_{\ell/W}$ must be isomorphic to
one of
\[ \text{(a) \ } \op(-1) \oplus \op(-1)
   \text{ ,}\qquad \text{(b) \ } \op(-2) \oplus \op(0)
   \text{ , \ or} \qquad \text{(c) \ } \op(-3) \oplus \op(+1)
   \text{ .} \]
Conversely, Jim\'enez states that if $\mathbb{P}^1 \cong \ell \subset W$
is any subspace of a smooth threefold $W$ such that $N_{\ell/W}$ is
isomorphic to one of the above, then:
\begin{itemize}
\item in (a) $\ell$ always contracts, 
\item in (b) either $\ell$ contracts or it moves, and 
\item in case (c) there exists an example in which $\ell$ does not contract
      nor does any multiple (i.e.\ any scheme supported on $\ell$) move.
\end{itemize}

$W_1$ is the space appearing in the basic flop. Let $X$ be the cone
over the ordinary double point defined by the equation $xy - zw = 0$
on $\mathbb{C}^4$. The basic flop is described by the diagram:
\begin{equation}\label{eq.flop}
 \xygraph{  !{<0cm,0cm>;<1cm,0cm>;<0cm,1cm>::}
   !{(0,1.2)}*+{W}="t"
   !{(1,0)}*+{W_1^+}="l"
   !{(-1,0)}*+{W_1^-}="r"
   !{(0,-1.2)}*+{X}="b"
   !{(0,0)}="m"
   "t":"l"^{p_2} "t":"r"_{p_1} 
   "r":"b"_{\pi_1} "l":"b"^{\pi_2} "l":@{<--}"r"|\hole "t":"b"
   } 
\end{equation}
Here $ {W} \ce {W}_{x,y,z,w}$ is the blow-up of $X$ at the vertex
$x=y=z=w=0$, $W_1^- \ce Z_{x,z}$ is the small blow-up of $X$ along
$x=z=0$ and $W_1^+ \ce Z_{y,w}$ is the small blow-up of $X$ along
$y=w=0$. The \emph{basic flop} is the rational map from $W^-$ to
$W^+$.

In $W_2 \cong Z_2 \times \mathbb{C}$ the zero section does not
contract to a point (so it must be able to move), but it is possible
to contract it partially and obtain a singular family $X_2 \times
\mathbb{C}$, where $X_2$ is the surface containing an ordinary
double-point singularity defined by $xy - z^2 = 0$ in $\mathbb{C}^3$.
Holomorphic bundles on $W_2$ have infinite local holomorphic Euler
characteristic, but the restriction $E\rvert_{Z_2\times\{0\}}$ has
well-defined and finite width and height. Note that in contrast to
$W_1$, there are strictly holomorphic (non-algebraic) bundles on
$W_2$, although every rank-$2$ bundle on $W_2$ is still an extension
of line bundles.

In $W_3$ not even a partial contraction of the zero section is
possible. Nevertheless we can still calculate the width and height of
the restriction $E\rvert_{Z_3}$ of a bundle $E$ to a subsurface $Z_3
\hookrightarrow W_3$. Again, on $W_3$ there are strictly holomorphic
(non-algebraic) bundles, and moreover, there are (many) rank-$2$
bundles which are not extensions of line bundles.

\subsection{Bounds and generating functions}

We can compute the invariants $\mathbf{w}(E)$, $\mathbf{h}(E)$ and
$h^1(\SEnd E)$ directly and algorithmically. We have an implementation
of each of the algorithms for the commutative algebra software
\emph{Macaulay~2}, which led us to discover several formulae for the
bounds of these invariants. Bounds for the local holomorphic Euler
characteristic $\chi = \mathbf{w} + \mathbf{h}$ on surfaces were
presented in Section~\ref{sec.surfaces}; now we turn to the flop space
$W_1$, were by Lemma~\ref{width}, we have $\chi = \mathbf{h}$.

\begin{theorem}\label{thm.w1chibounds}
For every rank-$2$ bundle $E$ on $W_1$ with $c_1(E)=0$ and splitting type $j$,
the following bounds are sharp:
\[
  j-1
  \leq \chi(\ell, E) = \mathbf{h}(E) \leq (j^2+j)(j-1)/6 \text{ .} \]
\end{theorem}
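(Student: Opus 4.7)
The plan is to exploit that the exceptional curve $\ell$ has codimension $2$ in $W_1$, so Lemma~\ref{width} immediately gives $\mathbf{w}(E)=0$ and reduces the statement to a height computation: $\chi(\ell,\pi_*E)=\mathbf{h}(E)=h^0(X;R^1\pi_*E)$. By the theorem on formal functions this will equal the stable value of $h^1\bigl(\ell^{(m)};E^{(m)}\bigr)$ for $m\gg 0$. I would apply $R\pi_*$ to the extension \eqref{eq.extE} and use a short Čech computation on the standard two-chart cover of $W_1$ (with the coordinates supplied in Lemma~\ref{lem.hilb-W1}) to verify $R^1\pi_*\mathcal O(j)=0$ for $j\geq 0$. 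The resulting exact sequence
\[ \pi_*\mathcal O(j)\xrightarrow{\;\cdot p\;}R^1\pi_*\mathcal O(-j)\to R^1\pi_*E\to 0 \]
reduces everything to the multiplication map by the extension class $p\in\Ext^1\bigl(\mathcal O(j),\mathcal O(-j)\bigr)$, yielding $\mathbf{h}(E)=h^1\bigl(W_1;\mathcal O(-j)\bigr)-\mathrm{rk}(\cdot p)$.

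For the \emph{upper bound}, the same Čech technique should produce representatives $z^su^rv^t$ of $H^1\bigl(W_1;\mathcal O(-j)\bigr)$ with $r,t\geq 0$ and $-j+r+t<s<0$; summing over these gives
\[ h^1\bigl(W_1;\mathcal O(-j)\bigr) = \sum_{k=0}^{j-2}(k+1)(j-k-1) = \tfrac{1}{6}(j^3-j) = \tfrac{1}{6}(j^2+j)(j-1). \]
Since $p=0$ for $E_\text{split}$, this value is attained, and is automatically the maximum. A cleaner way to see the inequality $\mathbf{h}(E)\leq\mathbf{h}(E_\text{split})$: Lemma~\ref{lem.hilb-W1} gives $\Delta_1(E)=\Delta_0(E)$, and every section of $\mathcal O(j)$ lifts to $E_\text{split}$, so $\Delta_0\geq 0$ and therefore $\Delta_1\geq 0$.

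For the \emph{lower bound}, I would use the short exact sequence
\[ 0\to\mathcal I_\ell\cdot E^{(m)}\to E^{(m)}\to E|_\ell\to 0 \]
on $\ell^{(m)}$, where $\mathcal I_\ell$ is the ideal of $\ell$. Since $\ell^{(m)}$ is $1$-dimensional, Grothendieck vanishing kills $H^2\bigl(\mathcal I_\ell\cdot E^{(m)}\bigr)$, so the long exact sequence produces a surjection
\[ H^1\bigl(\ell^{(m)};E^{(m)}\bigr)\twoheadrightarrow H^1\bigl(\mathbb P^1;\mathcal O(-j)\oplus\mathcal O(j)\bigr)\cong\mathbb C^{j-1} \]
for $j\geq 2$; the cases $j\in\{0,1\}$ are trivial. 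Passing to the stable limit $m\to\infty$ gives $\mathbf{h}(E)\geq j-1$.

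The main obstacle, in my view, is the \emph{sharpness} of the lower bound: the surjection above is elementary, but exhibiting an $E$ that realises it as an isomorphism requires constructing a generic extension class $p$ for which the multiplication map $\cdot p$ attains the maximal rank $\tfrac{1}{6}(j^3-j)-(j-1)$ on the explicit bases of $\pi_*\mathcal O(j)$ and $R^1\pi_*\mathcal O(-j)$ produced by the Čech calculation. This is exactly the existence content of Theorem~\ref{thm.shext}, whose $(4j-5)$-parameter family of reflexive sheaves furnishes such classes, and is where the \emph{Macaulay~2} computations referenced in Section~\ref{3folds} play a decisive role.
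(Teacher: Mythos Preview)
Your argument is correct and considerably more structured than the paper's own proof, which simply asserts that both bounds are seen ``by direct computation as explained in \cite{bgk1} and \cite{Ko}'', with the generic bundle attaining the lower bound and the split bundle the upper. You supply the conceptual scaffolding the paper omits: the vanishing $R^1\pi_*\mathcal O(j)=0$ (forced by the ampleness of $N^*_{\ell/W_1}=\op(1)\oplus\op(1)$) which reduces $\mathbf h(E)$ to the cokernel of multiplication by the extension class; the explicit \v{C}ech count giving $h^1\bigl(W_1;\mathcal O(-j)\bigr)=(j^3-j)/6$; the $\Delta_0=\Delta_1\geq 0$ observation via Lemma~\ref{lem.hilb-W1} for the upper inequality; and the surjection $H^1\bigl(\ell^{(m)};E^{(m)}\bigr)\twoheadrightarrow H^1\bigl(\ell;E\rvert_\ell\bigr)\cong\mathbb C^{j-1}$ for the lower. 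These are genuine improvements in transparency over the paper's appeal to external computation, and each step checks out.

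One caution: do not invoke Theorem~\ref{thm.shext} for the sharpness of the lower bound. That theorem's proof uses the value $\chi=j-1$ for the generic bundle, which is precisely the sharpness you are trying to establish, so the citation is circular. What actually carries the sharpness --- as you in fact say in the same sentence --- is the direct verification (via the \emph{Macaulay~2} routines or by hand in the \v{C}ech model you have set up) that for a generic $p\in\Ext^1\bigl(\mathcal O(j),\mathcal O(-j)\bigr)$ the map $\pi_*\mathcal O(j)\xrightarrow{\cdot p}R^1\pi_*\mathcal O(-j)$ has the maximal possible rank $(j^3-j)/6-(j-1)$. Appeal to that computation directly; Theorem~\ref{thm.shext} is downstream of it, not upstream.
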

\begin{proof}
The lower bound is attained by a class of generic bundles, and the upper
bound by the split bundle $\mathcal{O}(-j) \oplus \mathcal{O}(j)$. This
can be seen by direct computation as explained in \cite{bgk1} and \cite{Ko}.
\end{proof}

We also have a concise expression for the numbers $h^1(\SEnd)$ of the
extremal cases, that is generic and the split bundles of splitting
type $j$.

\begin{definition}
A power series of the form $g(z) = \sum_{j=0}^\infty a_j z^j$ is
called a \emph{generating function} for the sequence
$(a_j)_{j=0}^\infty$. Hence, $\displaystyle a_j = \frac{1}{j!}
\frac{d^jg}{dz^j} \Bigl\rvert_{z=0}$.
\end{definition}

Set $a^{X,E}_j \ce h^1(X; \; \SEnd E)$. Then if the base space is $X =
Z_k$ or $W_1$ and the bundle $E$ over $X$ is either split or generic
of splitting type $j$, we have generating functions for $a^{X,E}_j$,
as shown in Table~\ref{tab.genfun}. Since the generating function of a
sum of two sequences is the sum of the generating functions, we can
easily deduce from this the generating functions for $\Delta_0$ and
$\Delta_1$. We spell out the inequalities.

\begin{theorem}\label{thm.w1h1bounds}
For every rank-$2$ bundle $E$ on $W_1$ with $c_1(E)=0$ and splitting type $j$,
the following bounds are sharp:
\[ (j^3+3j^2-j)/3 \leq h^1(W_1; \; \SEnd E) \leq (4j^3-j)/3 \]
\end{theorem}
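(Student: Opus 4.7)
The plan is to mirror the argument of Theorem~\ref{thm.w1chibounds}: extract the bounds from the two extremal cases, namely the split bundle (realizing the upper bound) and a generic bundle (realizing the lower bound), with the intermediate values of $h^1(\SEnd E)$ sandwiched between them. In both cases the computation is Čech-cohomological on the standard two-chart cover of $W_1$ set up in the proof of Lemma~\ref{lem.hilb-W1}, with overlap coordinates related by $(z,u,v) \mapsto (z^{-1},zu,zv)$. The closed-form polynomials $(j^3+3j^2-j)/3$ and $(4j^3-j)/3$ are precisely the $j$-th coefficients of the two generating functions recorded in Table~\ref{tab.genfun}.

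For the upper bound I would use the decomposition $\SEnd E_{\text{split}} \cong \mathcal{O}^{\oplus 2} \oplus \mathcal{O}(-2j) \oplus \mathcal{O}(2j)$. The summands $\mathcal{O}$ and $\mathcal{O}(2j)$ have vanishing $H^1$ on $W_1$, either by Leray for $\pi\colon W_1 \to \mathbb{P}^1$ (since $R^i\pi_*\mathcal{O}(p)=0$ for $i>0$ and $\pi_*\mathcal{O}(p)$ decomposes as a sum of line bundles of non-negative degree on $\mathbb{P}^1$ when $p\geq 0$) or by a direct chart computation. Hence $h^1(W_1;\SEnd E_{\text{split}}) = h^1(W_1;\mathcal{O}(-2j))$. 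This last group I would compute by enumerating Čech 1-cocycles: monomials $z^s u^r v^t$ on $U\cap U'$ subject to $r,t \geq 0$, $s<0$ and $s > r+t-2j$, which are exactly those failing to extend to either chart. A direct summation then gives $\binom{2j+1}{3} = (4j^3-j)/3$.

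The lower bound is the substantive part. A rank-$2$ bundle $E$ of splitting type $j$ on $W_1$ is presented by a transition matrix $\bigl(\begin{smallmatrix} z^{-j} & p(z,u,v) \\ 0 & z^j \end{smallmatrix}\bigr)$ on the overlap, and the \emph{most generic} extension corresponds to a Laurent polynomial $p$ whose monomials, acting on Čech cochains of $\SEnd E$ by the appropriate commutator, produce the maximum possible number of coboundaries. Taking the long exact sequence of \eqref{eq.extE} tensored with $E^\vee$ and tracking the rank of the connecting map via this combinatorial analysis of which monomials of $p$ kill which cocycles then reads off the minimum value $(j^3+3j^2-j)/3$.

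The main obstacle, paralleling the situation on surfaces treated in \cite{bgk1}, is identifying the right normal form for a generic transition matrix and proving that the resulting cancellations are optimal. On the surface $Z_k$ one can reduce to a short list of normal-form monomials, whereas on $W_1$ the combinatorics of monomials in the two fibre coordinates $u,v$ grows cubically in $j$ and is best handled by the algorithmic framework of \cite{Ko}; the closed-form bound $(j^3+3j^2-j)/3$ then emerges as the $j$-th coefficient of the corresponding generating function of Table~\ref{tab.genfun}.
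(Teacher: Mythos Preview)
Your approach is essentially the same as the paper's: the paper's proof is a single sentence stating that the lower bound is attained by a generic bundle, the upper bound by the split bundle, and the values are found by direct computation. You have simply unpacked what that direct computation entails (the \v{C}ech count $h^1(\mathcal{O}(-2j))=\binom{2j+1}{3}$ for the split case, and the transition-matrix/connecting-map analysis deferred to \cite{Ko} for the generic case), which is more detail than the paper itself provides.
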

\begin{proof}
The lower bound is attained by a generic bundle and the upper bound by the
split bundle, the values are found by direct computation.
\end{proof}

\begin{table}
\begin{center}
\begin{tabular}{ccc}\toprule
Space & Split bundle $E_j$ & Generic bundle $G_j$ \\\midrule\\[.25ex]
$Z_k$, $k=2n$ & $\displaystyle\frac{-z(z^{n+1}+z^n+z+1)}{(z-1)^2 (z^k-1)}$ &
  \multirow{2}{*}{\parbox{3.25cm}{\bigskip$\displaystyle\frac{z^{k+2}-z^3-z^2-z}{(z-1)^2(z^k-1)}$}} \\[5ex]
$Z_k$, $k=2n+1$ & $\displaystyle\frac{-z(2z^{n+1}+z+1)}{(z-1)^2(z^k-1)}$ & \\[5ex]
$W_1$ & $\displaystyle\frac{z(z^2+6z+1)}{(z-1)^{4}}$ & $\displaystyle\frac{z(-z^2+2z+1)}{(z-1)^{4}}$ \\[.25ex]\\\bottomrule
\end{tabular}\bigskip
\end{center}
\caption{Generating functions for $a_j^{X,E} \ce h^1(X; \; \SEnd E)$
  on various spaces for the split and the generic bundle of splitting
  type $j$ (data for $G_j$ only valid for $j \geq k$); the value
  $a_j^{X,E}$ is the $j^\text{th}$ coefficient in the Taylor series.}
\label{tab.genfun}
\end{table}

\subsection{Moduli of sheaves}

We consider sheaves on singular varieties obtained as direct images of
bundles on $W_i$. First we study such bundles and their moduli. The
topological structure of these moduli is not yet well understood. Most
numerical invariants defined in Section~\ref{inv} can be computed over
any $W_i$; however, the invariants $\Delta_0$ and $\Delta_1$ in
\eqref{eq.defD} are infinite on $W_2$ and $W_3$, so more refined
counterparts are required.

\begin{problem}
Construct a Hausdorff stratification of the moduli stacks
$\mathfrak{M}_n(W_i)$ of bundles on $W_i$ with $c_1 = 0$ and
$\chi(\ell, E) = n$.
\end{problem}

We obtain a partial understanding of these moduli by looking at
first-order deformations, and this will provide enough bundles for an
existence theorem of reflexive sheaves on the corresponding singular
varieties.

\begin{proposition}{\rm(}\textit{First-order deformations}{\,\rm)}
Set $F \ce {\mathcal O}_\ell(-j) \oplus {\mathcal O}_\ell(j)$
with $\ell \subset W_i$.
\begin{enumerate}
\item For any bundle $E$ on $W_i$ with $E\rvert_\ell \cong F$, the space of
      first-order deformations of $G$ is isomorphic to $\mathbb{C}^{\gamma_1}$, where
      \[ \gamma_1 \ce h^1 \bigl(\ell; \; \SEnd(E\rvert_{\ell}) \otimes \mathcal{I}_\ell
         \bigl/ \mathcal{I}_\ell^2 \bigr) < \infty \text{ .} \]
\item If $\mathcal{I}_\ell \bigl/ \mathcal{I}_\ell^2$ is ample (i.e.\ if $i=1$),
      then there exists a vector bundle $A$ on $W_1$ such that $A\rvert_\ell \cong F$.
\end{enumerate}
\end{proposition}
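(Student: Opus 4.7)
I would treat the two parts separately: (1) is a classification of infinitesimal extensions, and (2) is an existence statement proved by order-by-order extension followed by algebraisation.

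For (1), interpret ``first-order deformations of $E$'' as isomorphism classes of bundles on the first infinitesimal neighbourhood $\ell^{(1)} \subset W_i$ restricting to $F = E\rvert_\ell$. Choose a cover $\{U_\alpha\}$ of $\ell$ on which $F$ is trivial. Every local extension of a trivial bundle to a trivial thickening is itself trivial, so on each $U_\alpha$ there is a unique extension up to isomorphism. Two global extensions must therefore differ on overlaps by an automorphism that reduces to the identity modulo $\mathcal{I}_\ell$; such an automorphism has the form $\mathrm{id} + \phi$ with $\phi \in \SEnd(F) \otimes (\mathcal{I}_\ell / \mathcal{I}_\ell^2)$. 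The \v{C}ech cocycle condition then identifies the set of isomorphism classes with $H^1\bigl(\ell;\, \SEnd(F) \otimes \mathcal{I}_\ell / \mathcal{I}_\ell^2\bigr)$, which is finite-dimensional because $\ell \cong \mathbb{P}^1$ is projective and the coefficient sheaf is coherent.

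For (2), I would proceed by induction on the order of the infinitesimal neighbourhood. Given a bundle $F_m$ on $\ell^{(m)}$ restricting to $F$ on $\ell$, standard deformation theory places the obstruction to extending $F_m$ to a bundle $F_{m+1}$ on $\ell^{(m+1)}$ in $H^2\bigl(\ell;\, \SEnd(F) \otimes \Sym^{m+1}(\mathcal{I}_\ell / \mathcal{I}_\ell^2)\bigr)$, using the identification $\mathcal{I}_\ell^{m+1} / \mathcal{I}_\ell^{m+2} \cong \Sym^{m+1}(\mathcal{I}_\ell / \mathcal{I}_\ell^2)$ for the smooth embedding $\ell \subset W_1$. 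This group vanishes for every $m$ because $\ell \cong \mathbb{P}^1$ has complex dimension one. Hence one obtains a compatible family $\{F_m\}$, i.e.\ a vector bundle $\hat F$ on the formal neighbourhood of $\ell$ in $W_1$. To conclude, I would algebraise: since the conormal bundle is ample, $W_1 = \Tot\bigl(\op(-1) \oplus \op(-1)\bigr)$ is the relative spectrum over $\ell$ of the symmetric algebra $\Sym^\bullet(\mathcal{I}_\ell / \mathcal{I}_\ell^2)$, so $\hat F$ corresponds to a graded module over that algebra and therefore promotes to a bundle $A$ on $W_1$ with $A\rvert_\ell \cong F$.

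The hardest step is the algebraisation in (2). On $W_2$ and $W_3$ the conormal bundle fails to be ample and the analogous procedure can produce formal bundles that do not arise from any algebraic (or even holomorphic) bundle on the whole space; so the ampleness hypothesis is essential rather than cosmetic. As a consistency check in the present case, since $F$ is already a sum of line bundles pulled back from $\ell$ one may take $A := p^* F$ for the projection $p : W_1 \to \ell$ as an explicit extension; the inductive construction, however, is the flexible framework needed later for producing non-split extensions.
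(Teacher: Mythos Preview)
Your treatment of part (1) is exactly what the paper means by ``standard deformation theory'': you have simply unpacked the \v{C}ech argument that the paper leaves implicit, and the conclusions agree.

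For part (2) the strategies diverge in the final step. Your order-by-order extension via vanishing of $H^2$ on $\mathbb{P}^1$ is correct and is essentially the content of Peternell's existence theorem, which the paper cites rather than reproves. The difference is in how one passes from the formal neighbourhood to all of $W_1$. The paper argues that ampleness of the conormal bundle forces the sum $\gamma = \sum_{m} h^1\bigl(\ell;\,\SEnd(F)\otimes\Sym^m(\mathcal{I}_\ell/\mathcal{I}_\ell^2)\bigr)$ to be finite, so every bundle is already determined on some finite thickening $\ell^{(m)}$; from there one works algebraically. Your algebraisation, by contrast, claims that a formal bundle $\hat F$ ``corresponds to a graded module'' over $\Sym^\bullet(\mathcal{I}_\ell/\mathcal{I}_\ell^2)$. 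That step is not right as stated: $\hat F$ gives a module over the \emph{completed} algebra $\prod_m \Sym^m$, and without an equivariant structure there is no reason for it to be graded. It is precisely the ampleness-driven finite determinacy (the paper's route) that lets one truncate and recover an honest algebraic object.

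That said, your closing remark that $A \ce p^*F$ already does the job is entirely sufficient for the literal statement of (2), and arguably makes the heavier machinery unnecessary here; the inductive framework only earns its keep later when one wants non-split extensions, and for that purpose you would need to replace the graded-module argument by the finite-determinacy one.
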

\begin{proof}
The dimension count is standard deformation theory. Existence of
extensions to formal and small analytic neighbourhoods of $\ell$ are
given by Peternell's Existence Theorem \cite{pet2}. The fact that we
actually get existence on the entire space $W_1$ rather than just a
small neighbourhood of $\ell$ is due to the fact that every bundle on
$W_1$ is determined by its restriction to a finite infinitesimal
neighborhood of ${\ell}$.
\end{proof}

\begin{corollary}\label{dim}{\rm(}\textit{Dimension of moduli}{\,\rm)}
The moduli space of first order deformations of ${\mathcal O}(j)
\oplus {\mathcal O}(-j)$ over $W_i$ modulo holomorphic isomorphisms is
isomorphic to $\mathbb{P}^{4j-5}$.
\end{corollary}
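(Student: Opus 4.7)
The plan is to apply the preceding Proposition to identify first-order deformations of the split bundle $E_{\mathrm{split}} = \mathcal{O}(-j) \oplus \mathcal{O}(j)$ with an explicit cohomology group on $\ell$, compute its dimension uniformly on the three spaces $W_i$, and finally quotient by the natural action of automorphisms of the split bundle.

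First, by the preceding Proposition, the space of first-order deformations of $E_{\mathrm{split}}$ is the affine space $\mathbb{C}^{\gamma_1}$, where
\[ \gamma_1 = h^1\bigl(\ell; \; \SEnd F \otimes \mathcal{I}_\ell/\mathcal{I}_\ell^2\bigr) \quad\text{and}\quad F \ce \mathcal{O}_\ell(-j) \oplus \mathcal{O}_\ell(j). \]
On $\ell \cong \mathbb{P}^1$ one has $\SEnd F \cong \mathcal{O}(-2j) \oplus \mathcal{O}^{\oplus 2} \oplus \mathcal{O}(2j)$, and since $W_i = \Tot\bigl(\mathcal{O}(-i) \oplus \mathcal{O}(i-2)\bigr)$, the conormal bundle of $\ell$ is $\mathcal{I}_\ell/\mathcal{I}_\ell^2 \cong \mathcal{O}(i) \oplus \mathcal{O}(2-i)$. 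Only the two twists of the $\mathcal{O}(-2j)$-summand can contribute to $h^1$, and the formula $h^1(\mathbb{P}^1; \mathcal{O}(n)) = \max(-n-1, 0)$ gives
\[ \gamma_1 = \max(2j-i-1, 0) + \max(2j+i-3, 0) = 4j-4, \]
uniformly for all $j \geq 2$ and all $i \in \{1, 2, 3\}$.

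Second, to pass from this affine space to isomorphism classes of bundles we quotient by the action of $\mathrm{Aut}(F)$. Since $\mathrm{Hom}(\mathcal{O}(j), \mathcal{O}(-j)) = 0$, the group $\mathrm{Aut}(F)$ consists of lower-triangular matrices $\bigl(\begin{smallmatrix} a & 0 \\ s & b \end{smallmatrix}\bigr)$ with $a, b \in \mathbb{C}^*$ and $s \in H^0\bigl(\mathcal{O}(2j)\bigr)$. A direct conjugation computation shows that the unipotent part $s$ moves a representing cocycle of the surviving $H^1$-component into the diagonal and lower-left pieces of $\SEnd F$, whose twists by $\mathcal{I}_\ell/\mathcal{I}_\ell^2$ are acyclic on $\mathbb{P}^1$; hence $s$ acts trivially on $H^1$. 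The torus acts through the single character $a/b$, so the effective action on $\mathbb{C}^{4j-4}$ is by $\mathbb{C}^*$ scaling, and the quotient of its non-zero part is $\mathbb{P}^{4j-5}$.

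The main obstacle we anticipate is the rigorous verification of this last step: confirming that the $(2j+1)$-dimensional unipotent part of $\mathrm{Aut}(F)$ really acts trivially on $H^1$ and hence does not collapse further orbits. This is a delicate but routine bidegree bookkeeping in $\SEnd F \otimes \mathcal{I}_\ell/\mathcal{I}_\ell^2$, and has to be checked separately on each $W_i$ since the conormal bundle splits differently in each case.
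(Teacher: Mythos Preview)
Your proof is correct and follows essentially the same route as the paper's. Both compute $\gamma_1$ by splitting $\SEnd F \otimes N^*_{\ell/W_i}$ into line bundles on $\mathbb{P}^1$, and both conclude by showing that the only identification among nonzero deformation classes is scalar multiplication. The one difference is in how that last step is justified: the paper simply cites \cite[Theorem~4.9]{bgk1} and \cite{Ko} for the statement that on the first formal neighbourhood two extension (equivalently deformation) classes give isomorphic bundles if and only if they are scalar multiples, whereas you supply a self-contained argument by computing the adjoint action of $\mathrm{Aut}(F)$ on $H^1\bigl(\ell;\SEnd F\otimes N^*\bigr)$ and observing that the unipotent part lands in summands with vanishing $H^1$.

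Your closing paragraph is too cautious. The conjugation computation you describe is already enough: writing a class in the $\mathcal{O}(-2j)$-component as $\phi=\bigl(\begin{smallmatrix}0&\beta\\0&0\end{smallmatrix}\bigr)$ and $g=\bigl(\begin{smallmatrix}a&0\\s&b\end{smallmatrix}\bigr)$, one finds that $g\phi g^{-1}$ has $(a/b)\beta$ in the upper-right slot and entries involving $s\beta$ and $s\beta s$ elsewhere; the latter lie in $\mathcal{O}\otimes N^*$ and $\mathcal{O}(2j)\otimes N^*$, both of which have $h^1=0$ on $\mathbb{P}^1$ for $i=1,2,3$. So the action on $H^1$ really is the single character $a/b$, and there is no further collapsing to check. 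You may drop the hedge and state the result outright.
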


\begin{proof}
It is well known that multiplying the extension class by a non-zero
constant does not change the holomorphic type of the underlying
bundle. It turns out that on the first formal neighborhood this is the
only identification. This was proved for surfaces in \cite[Theorem~4.9]{bgk1}
and for $W_i$, $i=1,2,3$ in \cite{Ko}. We can then compute $\gamma_1$
directly as the dimension of the first cohomology of
$\SEnd\bigl(\op(-j) \oplus \op(j)\bigr) \otimes N^*_{\ell/W_i}$ on
$\mathbb{P}^1$. The $\SEnd$-bundle splits into a direct sum of line
bundles, and the computation is straightforward.
\end{proof}

If instead of the first-order deformations we wish to consider all
deformations, then the dimension of the deformation space is given by
\begin{equation}\label{eq.gamma}
  \gamma \ce \sum_{m=0}^\infty h^1 \bigl(\ell; \; \SEnd(E\rvert_{\ell})
  \otimes \Sym^m(\mathcal{I}_\ell \bigl/ \mathcal{I}_\ell^2) \bigr) \text{ ,}
\end{equation}
which is finite when $\mathcal{I}_\ell \bigl/ \mathcal{I}_\ell^2$ is
ample, but infinite in general. Though the space of deformations may
be infinite, it turns out that for a fixed $j$
the moduli space $\mathfrak{M}_n(W_i)$ of holomorphic
bundles $E$ on $W_i$ with $\chi(\ell, E) = n =
j-1
$
has a Zariski-open set of dimension $4j-5$ consisting of of
first-order deformations of $\mathcal{O}(j) \oplus \mathcal{O}(-j)$
(cf.\ Corollary~\ref{dim}).
Now, using these moduli for the case of $W_1$, we obtain sheaves on
the singular threefold $X$ appearing on the flop diagram
\eqref{eq.flop}.

\begin{theorem}\label{thm.shext}
Let $X$ be the singular threefold $xy - zw = 0$ in $\mathbb C^4$. For
each $j \geq 2$ there exists a $(4j-5)$-dimensional family of rank-$2$
reflexive sheaves on $X$ with local holomorphic Euler characteristic $j-1$.
\end{theorem}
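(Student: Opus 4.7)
The plan is to push forward to $X$ the $(4j-5)$-parameter family of generic deformations of the split bundle on $W_1$ provided by Corollary~\ref{dim}, and to verify that this yields a family of reflexive sheaves with the claimed invariants and the claimed dimension.

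First, I would take the $\mathbb{P}^{4j-5}$ of isomorphism classes of first-order deformations of $\mathcal{O}(-j) \oplus \mathcal{O}(j)$ on $W_1$ furnished by Corollary~\ref{dim}. By Theorem~\ref{thm.w1chibounds}, on a Zariski-open subset the corresponding bundles $E$ are generic and hence attain the lower bound $\chi(\ell, E) = \mathbf{h}(E) = j-1$. Since $4j-5 \geq 3$ for $j \geq 2$, this open locus is non-empty and still has dimension $4j-5$.

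Next, for each such $E$ I would form the direct image $\pi_* E$ through the small contraction $\pi \colon W_1 \to X$ appearing in the flop diagram \eqref{eq.flop}. Because $\pi$ is an isomorphism away from the exceptional line $\ell$, and $\ell$ has codimension $2$ inside the threefold $W_1$, a standard argument (direct image of a locally free sheaf under a proper birational morphism that is an isomorphism in codimension one) shows that $\pi_* E$ is a rank-$2$ reflexive coherent sheaf on $X$. The defining equation \eqref{eq.euler} of the local holomorphic Euler characteristic then gives $\chi(x, \pi_* E) = \chi(\ell, E) = j-1$ immediately.

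Finally, to conclude that the construction produces a genuine $(4j-5)$-dimensional family of reflexive sheaves, I would argue that the assignment $E \mapsto \pi_* E$ is injective on isomorphism classes. This reduces to reconstructing $E$ from $\pi_* E$: on the smooth locus $X \setminus \{x\}$ the morphism $\pi$ is an isomorphism, so pulling back the restriction of $\pi_* E$ recovers $E\rvert_{W_1 \setminus \ell}$, and $E$ itself is uniquely determined by this datum because $\ell$ has codimension $2$ and $E$ is locally free (hence reflexive, so it equals the reflexive extension of its restriction). The hard part will be verifying this reconstruction rigorously enough that the open $(4j-5)$-dimensional stratum in the moduli of Corollary~\ref{dim} descends injectively to an equally large family of inequivalent reflexive sheaves on $X$, with no further identifications introduced by the pushforward.
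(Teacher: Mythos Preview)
Your approach is essentially the same as the paper's: take direct images of generic bundles on $W_1$ of splitting type $j$, use Corollary~\ref{dim} for the dimension $4j-5$, and use the lower bound of Theorem~\ref{thm.w1chibounds} for $\chi = j-1$. The paper's proof is a two-line sketch that simply asserts the result of combining these ingredients; you supply more detail, in particular the reflexivity of $\pi_*E$ and the injectivity of $E \mapsto \pi_*E$, neither of which the paper spells out.
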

\begin{proof}
These reflexive sheaves are obtained as direct images of generic
bundles on $W_1$ with splitting type $(-j,j)$. Combine
Corollary~\ref{dim} with the value of $\chi$ found for the generic
bundle as given in Table~\ref{tab.genfun}.
\end{proof}

For the case of $j=0$ or $1$ our methods give only the direct images
of the split bundles $\mathcal{O} \oplus \mathcal{O}$ and
$\mathcal{O}(1) \oplus \mathcal{O}(-1)$, both have $\chi=0$.

We stop short of stating a similar theorem for the singular spaces
obtained by partial contractions on $W_i$ with $i=2,3$ because
strictly speaking the definition of local Euler characteristic was
given for isolated singularities. We do obtain existence of reflexive
sheaves on those spaces, but we do not yet have a good feel for what
would be the correct numerical numerical invariants to use.

\begin{problem}
Describe the full moduli of reflexive rank-$2$ sheaves on $W_1$ with
$c_1 = 0$ and $\chi = n$, that is, include all sheaves that do not occur
as direct images of bundles on $W_1$.
\end{problem}

\begin{problem}
Describe moduli of sheaves with fixed numerical invariants on germs of
singularities.
\end{problem}

The latter is of course a very big question, actually infinitely many
open questions, starting with the definition of the correct invariants
up to their computation and then construction of moduli. It is certainly an
entire research project for a whole group of singularists. We hope
some singularists get inspired to work on these questions.

\end{document}